\documentclass[12pt]{article}
\usepackage[usenames]{color}
\usepackage{graphicx}
\usepackage[mathcal]{euscript}
\usepackage{stmaryrd}

\usepackage{amsmath,amssymb,amsthm,stmaryrd}
\usepackage[all]{xy}
\xyoption{arc}

\bibliographystyle{amsalpha}
\parskip 0.7pc
\parindent 0pt

\newtheorem{thm}{Theorem}
\newtheorem{cor}[thm]{Corollary}
\newtheorem{prop}[thm]{Proposition}
\newtheorem{lem}[thm]{Lemma}
\theoremstyle{definition}
\newtheorem{defn}[thm]{Definition}
\theoremstyle{remark}

\def\co{\colon\thinspace}
\newcommand{\mb}[1]{\mathbb{#1}}
\newcommand{\mf}[1]{\mathfrak{#1}}

\newcommand{\Hom}{\ensuremath{{\rm Hom}}}

\newcommand{\Map}{\ensuremath{{\rm Map}}}

\newcommand{\Ss}{\textbf{S}}
\newcommand{\Cat}{{\mathcal{C}\mathrm{at}}}
\newcommand{\Set}{{\mathcal{S}\mathrm{et}}}
\newcommand{\op}{{op}}

\title{Localization of enriched categories and cubical sets}
\author{Tyler Lawson\thanks{Partially supported by NSF
    grant DMS--1206008.}}

\begin{document}
\maketitle

\begin{abstract}
  The invertibility hypothesis for a monoidal model category $\Ss$
  asks that localizing an $\Ss$-enriched category with respect to an
  equivalence results in an weakly equivalent enriched category. This
  is the most technical among the axioms for $\Ss$ to be an {\em
    excellent model category} in the sense of Lurie, who showed that
  the category $\Cat_\Ss$ of $\Ss$-enriched categories then has a model
  structure with characterizable fibrant objects. We use a universal
  property of cubical sets, as a monoidal model category, to show that
  the invertibility hypothesis is consequence of the other axioms.
\end{abstract}

Topological categories, simplicial categories, and differential graded
categories are special types of enriched categories: the enriching
category has a notion of weak equivalence and its own homotopy
theory. These have played a prominent role a diverse array of
subjects.

Getting control over the homotopy theory of some of these enriched
categories and homotopical constructions in them (such as pushouts,
pullbacks, and other derived limit and colimit constructions) is
easier in the presence of model structures. If $\Ss$ is a monoidal
model category, Lurie gave conditions for the existence of a model
structure with many useful properties on the collection $\Cat_\Ss$ of
$\Ss$-enriched categories \cite[A.3.2.4]{lurie-htt}. The cofibrations
and weak equivalences in $\Cat_\Ss$ have a relatively straightforward
description (see \S\ref{sec:enriched-categories}), but in order to get
a useful characterization of the fibrations more assumptions are
required. With this goal, Lurie defined an {\em excellent} model
category as a model category $\Ss$, with a symmetric monoidal
structure, satisfying additional axioms labeled (A1) through (A5). The
first four of these axioms are all relatively standard concepts or are
straightforward to verify.

Axiom (A5) is called the invertibility hypothesis. It is more
technical---it roughly asserts that inverting a weak equivalence
results in a weakly equivalent enriched category---and is more
difficult to verify in practice.  The fact that the category
$\Set_\Delta$ of simplicial sets satisfies the invertibility
hypothesis is one of the main results in
\cite{dwyer-kan-simpliciallocalization}. The invertibility hypothesis
for differential graded categories is a consequence of
\cite[8.7]{toen-derivedmorita}, and for enrichment in simplicial model
categories it is one of the main theorems of
\cite{dundas-localization}.

Our main result is the following.

\begin{thm}
\label{thm:main}
  Let $\Ss$ be a combinatorial monoidal model category. Assume that
  every object of $\Ss$ is cofibrant and that the collection of weak
  equivalences in $\Ss$ is stable under filtered colimits. Then $\Ss$
  satisfies the invertibility hypothesis of \cite[A.3.2.12]{lurie-htt}.
\end{thm}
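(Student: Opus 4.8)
The plan is to verify the invertibility hypothesis by transporting an explicit verification from cubical sets, exploiting their universal property as a monoidal model category. Recall that the hypothesis concerns the localization $\mathcal{C} \to \mathcal{C}'$ obtained from an $\Ss$-enriched category $\mathcal{C}$ by formally adjoining an inverse to a morphism $f \colon x \to y$ whose image in the homotopy category $h\mathcal{C}$ is already invertible; concretely $\mathcal{C}' = \mathcal{C} \coprod_{[1]_\Ss} \mathcal{E}_\Ss$ is the pushout in $\Cat_\Ss$ of the map $[1]_\Ss \to \mathcal{C}$ classifying $f$ along the canonical map $j \colon [1]_\Ss \to \mathcal{E}_\Ss$ from the walking morphism to the walking equivalence, and the goal is to show that $\mathcal{C} \to \mathcal{C}'$ is a weak equivalence. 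The key observation is that $[1]_\Ss$, $\mathcal{E}_\Ss$, and $j$ are assembled entirely from the unit $\mathbf{1}$, an interval, and their tensor powers, so that they and the bookkeeping governing the pushout are defined over cubical sets.

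The first ingredient is a universal property of cubical sets as a monoidal model category: for any combinatorial monoidal model category in which every object is cofibrant and any choice of interval object, there is a monoidal left Quillen functor $F$ from cubical sets carrying the standard cubical interval to the chosen one. Since $I^m \times I^n \cong I^{m+n}$, cubical sets are the free such category on an interval, which is exactly why cubes and not simplices are the right source. I would apply this with the interval in $\Ss$ coming from a cylinder on $\mathbf{1}$, so that $F$ sends the cubical walking morphism, walking equivalence, and comparison map to $[1]_\Ss$, $\mathcal{E}_\Ss$, and $j$. Because every object of $\Ss$ is cofibrant, Ken Brown's lemma shows that $F$ preserves all weak equivalences, and as a left adjoint $F$ preserves the colimits that compute the mapping objects in a pushout of enriched categories; hence the induced functor on enriched categories preserves both weak equivalences and the relevant pushouts.

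Next I would reduce the statement, for arbitrary $\mathcal{C}$ and $f$, to a universal cubical assertion. The mapping objects of $\mathcal{C}'$ admit a zigzag, hammock-type description as a filtered colimit of tensor products of mapping objects of $\mathcal{C}$ with copies of the interval, indexed by a category of reduced zigzags; the reduction relations, namely inserting identities and cancelling $f$ against its formal inverse, organize these indexing diagrams into cubes, which is precisely the data $F$ computes from the corresponding cubical diagram. Granting that $f$ is a homotopy equivalence in $\mathcal{C}$, the comparison $\Map_{\mathcal{C}}(a,b) \to \Map_{\mathcal{C}'}(a,b)$ is then a colimit over this cubical indexing of maps each of which is a weak equivalence, since tensoring a cofibrant object with the interval is a weak equivalence. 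Stability of weak equivalences under filtered colimits upgrades this termwise statement to a weak equivalence of colimits, which is the content of the hypothesis.

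The combinatorial heart, and the step I expect to be the main obstacle, is the base case inside cubical sets: proving directly that adjoining an inverse to an already-invertible edge is a weak equivalence there, that is, that the cubical avatar of $j$ becomes, after the pushout, an anodyne map. Here the advantage of cubes is decisive, since a homotopy inverse and all of its higher coherences can be written down as explicit faces and degeneracies of cubes and the required collapse exhibited cellularly. Granting this, the previous two steps transport it to $\Ss$, and the only features of $\Ss$ used beyond its being a combinatorial monoidal model category are that every object is cofibrant, which makes $F$ homotopically well behaved, and that weak equivalences are stable under filtered colimits, which lets the verification pass through the colimit defining the localization; this is exactly why these hypotheses suffice to deduce the invertibility hypothesis from the remaining axioms.
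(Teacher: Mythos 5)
Your first step --- the universal property of cubical sets, giving a monoidal left Quillen functor $\Set_\square \to \Ss$ determined by a cylinder object for the unit --- is exactly the paper's Corollary~\ref{cor:fromcubical}, and identifying the walking equivalence as an object defined over cubes is the right idea. But the second half of your argument has two genuine gaps. First, the step you yourself flag as ``the main obstacle'' --- that adjoining an inverse to an already homotopy-invertible edge is a weak equivalence in the universal cubical example --- is precisely the hard content, and ``explicit faces and degeneracies \dots exhibited cellularly'' is not a proof. The paper does not attack this cellularly in $\Set_\square$: it pushes the universal example $\mathcal{E}$ across the monoidal Quillen \emph{equivalence} $\Set_\square \to \Set_\Delta$, invokes Dwyer--Kan for the localization $L(\mathcal{E}) \to L(\mathcal{E})\langle f^{-1}\rangle$, and then identifies the relevant mapping monoid with the James construction $J(\Delta^1 \vee \Delta^1)$, which is contractible. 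There is a real subtlety here that your sketch does not see: $\mathcal{E}$ must adjoin \emph{separate} left and right homotopy inverses; if one adjoins a single two-sided inverse the same computation yields $J(S^1) \simeq \Omega S^2$ and the argument fails.

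Second, your mechanism for passing from the universal case to a general $\mathcal{C}$ --- a hammock-type description of $\Map_{\mathcal{C}\langle f^{-1}\rangle}(a,b)$ as a filtered colimit of terms that are termwise weakly equivalent to $\Map_{\mathcal{C}}(a,b)$ --- is not justified and is unlikely to go through as stated: the colimits computing mapping objects in a pushout of enriched categories are sequential colimits of pushouts indexed by words, not filtered colimits of weak equivalences, so axiom (A3) cannot be applied the way you propose; carrying this out would essentially amount to redoing Dwyer--Kan (or Dundas) in an arbitrary $\Ss$, which is what the paper is designed to avoid. The paper's route is different and much lighter: reduce to fibrant $\mathcal{C}$, use the extension lemma (Lemma~\ref{lem:invextension}, via the right adjoint $R$ of $L\co \Cat_\square \to \Cat_\Ss$, which preserves homotopy categories) to factor $[1]_\Ss \to \mathcal{C}$ through a \emph{cofibration} $L(\mathcal{E}) \to \mathcal{C}$, observe that $\mathcal{C}\langle f^{-1}\rangle$ is then the pushout of the weak equivalence $L(\mathcal{E}) \to L(\mathcal{E}\langle f^{-1}\rangle)$ along this cofibration, and conclude by left properness of $\Cat_\Ss$. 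You would need to supply both the fibrancy/extension step and the left-properness argument (or a working substitute) to make your outline into a proof.
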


Combinatoriality is Lurie's axiom (A1), cofibrance of all objects is a
consequence of axiom (A2), stability of weak equivalences under
filtered colimits is axiom (A3), and the model structure being monoidal
is axiom (A4). (Lurie also asks as part of the definition that the
model category be symmetric monoidal.)

Our method is the following. We will first show that the category
$\Set_\square$ of cubical sets, with a model structure due to Cisinski
\cite{cisinski-testcat}, admits a monoidal left Quillen functor out to
essentially {\em any} monoidal model category $\Ss$: the choice of
such a functor is essentially a choice of cylinder object for the
monoidal unit. Second, we will show (in a method adapted from
{\cite[A.3.2.20, A.3.2.21]{lurie-htt}}) that this left Quillen functor
allows the model category $\Ss$ to inherit the invertibility
hypothesis from $\Set_\square$.

This gives cubical sets a useful universal property. We also note that
the construction $\mf C[-]$ of \cite[\S 1.1.5]{lurie-htt} which takes
simplicial sets to categories enriched in simplicial sets, fundamental
to the study of quasicategories, factors naturally through categories
enriched in cubical sets. However, some good properties of simplicial
sets are lost: the monoidal structure on cubical sets is not
symmetric, and understanding the homotopy theory of cubical sets
requires hard theorems.

The author would like to thank Adeel Khan for discussion related to
this paper.

\section{Cubical sets}

We'll begin with preliminaries on the category of cubical sets.

\begin{itemize}
\item The cube category $\square$ has, as objects, the $n$-cubes
  $\square^n$ for $n \geq 0$.
\item The maps $\square^n \to \square^m$ are in bijective
  correspondence with the set of maps $[0,1]^n \to [0,1]^m$ which are
  composites of coordinate projections $(x_1,\dots,x_n) \mapsto
  (x_1,\dots,\widehat{x_i},\dots,x_n)$ and face inclusions
  $(x_1,\dots,x_n) \mapsto (x_1,\dots,x_{i-1},\epsilon,x_i,\dots,x_n)$ 
  for $\epsilon \in \{0,1\}$.
\item There is a monoidal product $\otimes\co \square \times \square
  \to \square$ such that $\square^n \otimes \square^m =
  \square^{n+m}$, corresponding to the isomorphism $[0,1]^n \times
  [0,1]^m \cong [0,1]^{n+m}$. (This monoidal product is not
  symmetric.)
\end{itemize}

Write $\square^{\leq 1}$ for the full subcategory of $\square$ spanned
by $\square^0$ and $\square^1$. In this subcategory, there are two
maps $j^0, j^1\co \square^0 \to \square^1$ and a map $r\co \square^1
\to \square^0$, and these satisfy $r j^0 = r j^1 = id_{\square^0}$;
moreover, these maps and these relations generate all maps and
relations in $\square^{\leq 1}$.

The category $\square$ is generated by $\square^{\leq 1}$ and the
monoidal structure in the following sense.
\begin{prop}
  Suppose $\mathcal{C}$ is a monoidal category. For any functor
  $F\co \square^{\leq 1} \to \mathcal{C}$ such that $F(\square^0)$ is
  the monoidal unit $\mb I$, there exists an extension to a
  functor $\tilde F\co \square \to \mathcal{C}$ which is
  monoidal. This extension is unique up to natural isomorphism.
\end{prop}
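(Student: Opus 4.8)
The plan is to exhibit $\tilde F$ by generators and relations, handling uniqueness and existence in tandem. Since the statement only asks for an extension up to natural isomorphism, I would first invoke MacLane's coherence theorem to replace $\mathcal{C}$ by a monoidally equivalent strict monoidal category; a strong monoidal functor into the strict model transports back to one into $\mathcal{C}$, and since $\square$ is itself strict monoidal ($\square^n \otimes \square^m = \square^{n+m}$ on the nose) this removes all bookkeeping with associators and unitors. Write $A = F(\square^1)$, $\partial^\epsilon = F(j^\epsilon)\co \mb I \to A$ and $\rho = F(r)\co A \to \mb I$; functoriality of $F$ together with the relation $rj^0 = rj^1 = id_{\square^0}$ and the hypothesis $F(\square^0) = \mb I$ gives the single relation $\rho \partial^0 = \rho \partial^1 = id_{\mb I}$.

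Next I would record how morphisms of $\square$ are generated. Every map $\square^n \to \square^m$, being a composite of coordinate projections and face inclusions, is determined by a monotone partial injection from the output coordinates $\{1,\dots,m\}$ to the input coordinates $\{1,\dots,n\}$ together with a label in $\{0,1\}$ on each output coordinate outside the image; equivalently $\square$ is generated by the elementary cofaces $\gamma_i^\epsilon\co \square^{n-1} \to \square^n$ and codegeneracies $\sigma_i\co \square^n \to \square^{n-1}$. A strict monoidal $\tilde F$ is then forced on objects, $\tilde F(\square^n) = A^{\otimes n}$ with $\tilde F(\square^0) = \mb I$; and on generators, since in $\square$ one has $\gamma_i^\epsilon = id^{\otimes(i-1)} \otimes j^\epsilon \otimes id^{\otimes(n-i)}$ and $\sigma_i = id^{\otimes(i-1)} \otimes r \otimes id^{\otimes(n-i)}$ (using the strict unit to absorb the intervening $\square^0$), any strict monoidal extension must send these to $id^{\otimes(i-1)} \otimes \partial^\epsilon \otimes id^{\otimes(n-i)}$ and $id^{\otimes(i-1)} \otimes \rho \otimes id^{\otimes(n-i)}$. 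I would take these formulas as the definition of $\tilde F$, which immediately yields uniqueness among strict monoidal extensions.

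The main work, and the step I expect to be the real obstacle, is checking that these assignments respect the defining relations of $\square$, so that $\tilde F$ is well defined as a functor. Concretely this means verifying the cubical identities relating composites of the $\gamma_i^\epsilon$ and $\sigma_i$ (coface--coface, codegeneracy--codegeneracy, and the mixed relations governing $\sigma_j \gamma_i^\epsilon$ according to whether $i < j$, $i = j$, or $i > j$). I would dispatch every relation whose two operations act on disjoint coordinates using only the interchange law of the monoidal product, $(id \otimes g) \circ (f \otimes id) = (f \otimes id) \circ (id \otimes g)$, which holds because $\otimes$ is a bifunctor; the sole relation in which the operations meet on the same coordinate is $\sigma_i \gamma_i^\epsilon = id$, which $\tilde F$ sends to $\rho \partial^\epsilon = id_{\mb I}$, precisely the relation extracted from $\square^{\leq 1}$. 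Thus each relation of $\square$ reduces formally to bifunctoriality of $\otimes$ and $\rho\partial^\epsilon = id$, and $\tilde F$ would be a well-defined strict monoidal functor.

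Finally, transporting $\tilde F$ back across the strictification equivalence produces a strong monoidal functor $\square \to \mathcal{C}$ extending $F$, with structure isomorphisms assembled from the coherence data of $\mathcal{C}$; any two such extensions, being strict and forced on generators after strictification, then differ by a unique monoidal natural isomorphism built from that coherence data. This is exactly the assertion that $\square$ is the free monoidal category on the interval data $(A; \partial^0, \partial^1, \rho)$, which is the universal property to be exploited in the next section.
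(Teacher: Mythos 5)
The paper states this proposition without proof, so there is no argument of the author's to compare against; your write-up supplies the standard argument that was presumably intended, and it is correct. The key points are all in place: strictification via coherence is legitimate here and genuinely harmless because $\square$ is already strict; the identification $\gamma_i^\epsilon = id^{\otimes(i-1)}\otimes j^\epsilon\otimes id^{\otimes(n-i)}$ and $\sigma_i = id^{\otimes(i-1)}\otimes r\otimes id^{\otimes(n-i)}$ (valid since $F(\square^0)=\mb I$ is the strict unit) is exactly what forces the extension on generators and gives uniqueness; and your observation that every cubical identity except $\sigma_i\gamma_i^\epsilon = id$ is an instance of bifunctoriality of $\otimes$, with that one remaining relation being precisely $\rho\partial^\epsilon = id_{\mb I}$ imported from $\square^{\leq 1}$, is the heart of why $\square$ is free monoidal on the co-interval. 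The one step you assert rather than prove is that the cofaces and codegeneracies together with the cubical identities form a \emph{complete} presentation of $\square$; your normal-form description of morphisms (monotone partial injection plus $\{0,1\}$-labels on the complement) is the right tool, but to close the argument one should note that the listed relations suffice to rewrite any word in the generators into the normal form ``all codegeneracies first, then all cofaces, each in increasing position order,'' after which well-definedness of $\tilde F$ follows. This is routine for this version of the cube category (no symmetries or connections), so I would not call it a gap, but it deserves a sentence.
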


A cubical set is a functor $\square^\op \to \Set$, and $\Set_\square$
is the category of cubical sets. The covariant Yoneda embedding
$\theta\co \square \to \Set_\square$ satisfies the following
property.
\begin{prop}
  Suppose $\mathcal{C}$ is cocomplete. For any functor $F\co \square
  \to \mathcal{C}$, the ``singular cubical set'' functor
  $\Hom_{\mathcal{C}}(F(\square^\bullet), -)$ has a left adjoint
  $\tilde F\co \Set_\square \to \mathcal{C}$ extending $F$. This
  extension is unique up to natural isomorphism.
\end{prop}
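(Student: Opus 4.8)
The plan is to construct $\tilde F$ as the left Kan extension of $F$ along the Yoneda embedding $\theta$, exploiting the fact that a cocomplete $\mathcal{C}$ admits all the colimits the construction requires. Concretely, for a cubical set $X$ I would set $\tilde F(X) = \colim_{(\square \downarrow X)} F$, the colimit of the composite $\square \downarrow X \to \square \xrightarrow{F} \mathcal{C}$, where $\square \downarrow X$ is the category of elements of $X$ (its objects are pairs $(\square^n, x)$ with $x \in X(\square^n)$, equivalently maps $\theta(\square^n) \to X$). Equivalently this is the coend $\int^{n} X(\square^n) \cdot F(\square^n)$, with $\cdot$ the copower by a set. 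Cocompleteness guarantees the colimit exists, and since $X \mapsto (\square \downarrow X)$ is functorial and colimits are functorial, $\tilde F$ is a well-defined functor.

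The key step is the density (co-Yoneda) presentation of cubical sets: every $X$ is canonically the colimit $\colim_{(\square \downarrow X)} \theta(\square^\bullet)$ of the representables mapping into it. Granting this, the adjunction is a short computation. Writing $S = \Hom_{\mathcal{C}}(F(\square^\bullet), -)$ for the singular functor, I would compute
\[
\Hom_{\Set_\square}(X, SY) \cong \lim_{(\square \downarrow X)^{\op}} \Hom_{\Set_\square}(\theta(\square^n), SY) \cong \lim_{(\square \downarrow X)^{\op}} \Hom_{\mathcal{C}}(F(\square^n), Y),
\]
where the first isomorphism uses that $\Hom$ carries the colimit presentation of $X$ to a limit and the second is the Yoneda lemma applied to the cubical set $SY$. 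Since $\mathcal{C}$-homs turn colimits in the first variable into limits, the last expression equals $\Hom_{\mathcal{C}}(\colim_{(\square \downarrow X)} F(\square^n), Y) = \Hom_{\mathcal{C}}(\tilde F X, Y)$. Verifying that these isomorphisms are natural in both $X$ and $Y$ is the only genuinely laborious part, but it is forced by naturality of Yoneda and of the universal cocones.

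For the extension property, I would observe that $\square \downarrow \theta(\square^m)$ has a terminal object, namely the identity $\square^m \to \square^m$, so the defining colimit collapses to $F(\square^m)$; hence $\tilde F \circ \theta \cong F$. Uniqueness is then immediate: the right adjoint $S$ is determined by $F$ alone, and a left adjoint to a fixed functor is unique up to canonical natural isomorphism. (Alternatively, any extension of $F$ admitting a right adjoint preserves colimits, hence is determined on the colimit presentation of each $X$, forcing agreement with $\tilde F$.) I do not anticipate a real obstacle here, since the statement is the standard nerve–realization paradigm for presheaf categories; the work is confined to the coend bookkeeping needed to make the construction and the adjunction isomorphism natural.
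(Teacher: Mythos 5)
Your proof is correct and is exactly the standard nerve--realization argument (left Kan extension along the Yoneda embedding via the category of elements, co-Yoneda for the adjunction, terminal object of $\square \downarrow \theta(\square^m)$ for the extension property, uniqueness of adjoints) that the paper implicitly invokes; the paper itself states this proposition without proof. No discrepancies to report.
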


The monoidal structure on $\square$ gives rise to a Day convolution
product $\otimes$ on $\Set_\square$. More specifically, given $X,Y\co
\square^\op \to \Set$, the tensor $X \otimes Y$ is the left Kan
extension of $X \times Y\co \square^\op \times \square^\op \to \Set
\times \Set \to \Set$ along the functor $\otimes\co \square^\op \times
\square^\op \to \square^\op$. The universal property of left Kan
extension gives this (nonsymmetric) monoidal product a universal
property as well.
\begin{prop}
  Suppose $\mathcal{C}$ is a cocomplete category with a monoidal
  structure that preserves colimits in each variable separately. For
  any monoidal functor $F\co \square \to \mathcal{C}$, the
  colimit-preserving extension $\tilde F\co \Set_\square \to
  \mathcal{C}$ is also monoidal.
\end{prop}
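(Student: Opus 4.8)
The plan is to exploit the fact that $\Set_\square$ with Day convolution is the \emph{free monoidal cocompletion} of $\square$. Concretely, I would compare the two functors $\Set_\square \times \Set_\square \to \mathcal{C}$ given by $(X,Y) \mapsto \tilde F(X \otimes Y)$ and $(X,Y) \mapsto \tilde F(X) \otimes \tilde F(Y)$, show that they agree on representables via the monoidal structure of $F$, and then argue that they are naturally isomorphic because each preserves colimits separately in the two variables.

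First I would record the monoidality of the Yoneda embedding $\theta$. By the definition of Day convolution as a left Kan extension along $\otimes\co \square^\op \times \square^\op \to \square^\op$, there is a natural isomorphism $\theta(\square^n) \otimes \theta(\square^m) \cong \theta(\square^n \otimes \square^m) = \theta(\square^{n+m})$, and the monoidal unit of $\Set_\square$ is $\theta(\square^0)$; in particular $\theta$ is strong monoidal. Next I would observe that both candidate functors are separately cocontinuous: $\tilde F(-) \otimes \tilde F(-)$ because $\tilde F$ is a left adjoint (hence cocontinuous) and $\otimes$ on $\mathcal{C}$ preserves colimits in each variable by hypothesis; and $\tilde F(- \otimes -)$ because the Day convolution preserves colimits in each variable (being given by a left Kan extension, as noted above) and $\tilde F$ is cocontinuous. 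Restricting both along $\theta \times \theta$ evaluates, at a pair $(\square^n,\square^m)$, to $\tilde F(\theta(\square^{n+m})) = F(\square^{n+m}) = F(\square^n \otimes \square^m)$ on one side and $F(\square^n) \otimes F(\square^m)$ on the other, and these are identified by the structure isomorphism of the monoidal functor $F$.

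Since every cubical set is canonically a colimit of representables, a two-variable density argument then upgrades this to a natural isomorphism: expanding $X = \colim \theta(\square^n)$ and $Y = \colim \theta(\square^m)$ over their categories of elements and commuting these colimits past both functors, separate cocontinuity forces $\tilde F(X) \otimes \tilde F(Y) \cong \tilde F(X \otimes Y)$ naturally in $X$ and $Y$. The unit constraint is immediate, since $\tilde F(\theta(\square^0)) = F(\square^0) = \mb I$.

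Finally I would verify the coherence axioms for $\tilde F$, namely the associativity pentagon and the two unit triangles. The point is that each such diagram is an equality of natural transformations between separately-cocontinuous functors of several cubical-set variables, so by the same density reduction it suffices to check it on representables, where it is exactly the corresponding coherence diagram for $F$. The main obstacle is precisely this bookkeeping: setting up the two-variable (and for associativity, three-variable) cocontinuity reduction carefully enough that the coherence cells for $\tilde F$ are seen to be determined by those for $F$. Once separate cocontinuity of all the functors involved is in hand each verification is forced, but making the reduction precise is where the real work lies.
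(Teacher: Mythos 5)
Your argument is correct and is exactly the standard unpacking of the universal property of Day convolution that the paper itself invokes (without writing out a proof) when it says the left Kan extension ``gives this monoidal product a universal property as well.'' The two-variable density reduction---agreement on representables via the strong monoidality of $\theta$ and of $F$, separate cocontinuity of both composite functors, and the same reduction applied to the coherence diagrams---is precisely the intended argument, so there is nothing to correct or add.
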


We now consider monoidal model categories. Recall that for maps
$f_1\co A_1 \to B_1$ and $f_2\co A_2 \to B_2$ in a cocomplete monoidal
category $\mathcal{C}$, the pushout-product $f_1 \boxtimes f_2$ is the
map
\[
A_1 \otimes B_2 \coprod_{A_1 \otimes A_2} B_1 \otimes A_2 \to B_1
\otimes B_2.
\]
Write $i$ for the map $j^0 \amalg j^1\co \square^0 \coprod
\square^0 \to \square^1$ in $\Set_\square$. The pushout-product
allows us to define cubical sets $\partial \square^n$ (for $n \geq 0$)
and $\sqcap^n_{(k,\epsilon)}$ (for $1 \leq k \leq n$ and $\epsilon \in
\{0,1\})$ as the sources of the following pushout-product maps:
\begin{align}
\label{eq:gencof}
(i \boxtimes \dots \boxtimes i) &\co \partial \square^n \to
\square^n\\
\label{eq:genaccof}
(i \boxtimes \dots \boxtimes j^\epsilon \boxtimes \dots \boxtimes i) &\co
\sqcap^n_{(k,\epsilon)} \to \square^n
\end{align}
(We will use the convention that, when $n=0$, the map $\partial
\square^0 \to \square^0$ of Equation~(\ref{eq:gencof}) is the map
$\emptyset \to \square^0$ from the initial object.)

We will require the following result of Cisinski.
\begin{thm}[{\cite{cisinski-testcat, jardine-categorical}}]
  There exists a combinatorial, left proper model structure on
  $\Set_\square$ with generating cofibrations the maps of
  Equation~\eqref{eq:gencof} and generating acyclic cofibrations the
  maps of Equation~\eqref{eq:genaccof}. There is a monoidal left
  Quillen equivalence $\Set_\square \to \Set_\Delta$, given by the
  cubical realization functor which sends $\square^n$ to
  $(\Delta^1)^n$.
\end{thm}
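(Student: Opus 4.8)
The plan is to recognize this as an instance of Cisinski's general machinery for model structures on a presheaf topos and then to compare the result with the standard model structure on $\Set_\Delta$. First I would verify that the monomorphisms of $\Set_\square$ are exactly the saturation of the boundary inclusions $\partial \square^n \into \square^n$ of Equation~\eqref{eq:gencof}: by the skeletal filtration of a cubical set along its nondegenerate cells, every monomorphism is a transfinite composite of pushouts of these generators. Next I would assemble the homotopical data Cisinski requires: the functorial cylinder $X \mapsto \square^1 \otimes X$, whose two endpoint inclusions and contraction are induced by $j^0, j^1\co \square^0 \to \square^1$ and $r\co \square^1 \to \square^0$, together with the class of anodyne extensions generated by the cubical horns $\sqcap^n_{(k,\epsilon)} \into \square^n$ of Equation~\eqref{eq:genaccof}. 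Checking that this cylinder is exact and that the horns satisfy Cisinski's compatibility axioms lets his existence theorem produce a cofibrantly generated model structure whose cofibrations are the monomorphisms and whose generating (acyclic) cofibrations are the displayed maps. Combinatoriality is then immediate, since $\Set_\square$ is a presheaf category (hence locally presentable) and the two generating collections are sets; left properness holds because the structure is a Cisinski model structure, for which pushouts of weak equivalences along monomorphisms are again weak equivalences.

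For the comparison functor, I would define the realization $|-|\co \Set_\square \to \Set_\Delta$ as the colimit-preserving extension of the monoidal functor $\square \to \Set_\Delta$ determined by $\square^1 \mapsto \Delta^1$. By the propositions above this extension is monoidal and has a right adjoint, the singular cubical complex $\mathrm{Sing}\,(Y) = \Hom_{\Set_\Delta}(|\square^\bullet|, Y)$. To see that $|-|$ is left Quillen I would check that it carries generators to the correct classes. Because realization is monoidal and colimit-preserving it commutes with pushout-products, so the boundary inclusion $\partial \square^n \into \square^n$ maps to the $n$-fold pushout-product of $\partial \Delta^1 \into \Delta^1$, namely the monomorphism $\partial((\Delta^1)^n) \into (\Delta^1)^n$, which is a cofibration; likewise each cubical horn maps to a pushout-product involving an endpoint inclusion $\Delta^0 \into \Delta^1$, which the pushout-product axiom in $\Set_\Delta$ makes an acyclic cofibration.

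The main obstacle will be the Quillen \emph{equivalence}: showing that the derived unit and counit are weak equivalences. This is where the genuine content of Cisinski's and Jardine's theorems enters, for one needs to know that $\square$ is a (strict) test category, so that the homotopy theory produced above is that of spaces and agrees with $\Set_\Delta$ under realization. Concretely, I would argue that the unit $X \to \mathrm{Sing}\,|X|$ is a weak equivalence for every cubical set $X$ by reducing, via the skeletal filtration and left properness, to the representables $\square^n$, where it follows from the contractibility of $(\Delta^1)^n$ and of its cubical model. The delicate inputs here—the exactness of the cubical cylinder and the interaction of the non-symmetric monoidal product with the test-category axioms—are exactly the hard theorems alluded to in the introduction, and for these I would defer to \cite{cisinski-testcat, jardine-categorical} rather than reproduce them.
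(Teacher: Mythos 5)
The paper offers no proof of this theorem at all: it is quoted verbatim from \cite{cisinski-testcat, jardine-categorical}, so there is no ``paper's approach'' to compare against. Your outline is a faithful reconstruction of the architecture of the cited arguments --- identify the monomorphisms as the saturation of the boundary inclusions via the skeletal filtration, feed the cylinder $\square^1 \otimes (-)$ and the open-box inclusions into Cisinski's existence theorem, observe that left properness is automatic when every object is cofibrant, and build the realization--singular adjunction from the monoidal functor $\square^1 \mapsto \Delta^1$. The verification that realization sends generating (acyclic) cofibrations to (acyclic) cofibrations via compatibility with pushout-products is correct and is essentially the same computation the paper later uses in the proof of Corollary~\ref{cor:fromcubical}.

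There is one place where your outline claims more than the general machinery delivers. Cisinski's existence theorem produces a model structure in which the anodyne extensions generated by the maps $\sqcap^n_{(k,\epsilon)} \into \square^n$ are acyclic cofibrations, and in which fibrant objects and fibrations \emph{between fibrant objects} are detected by lifting against them; it does \emph{not} automatically yield that these maps are a set of \emph{generating} acyclic cofibrations, i.e.\ that every acyclic monomorphism lies in their saturation (equivalently, that every ``naive'' cubical Kan fibration is a fibration). That identification is a genuinely hard theorem specific to cubical sets --- the cubical analogue of Quillen's theorem that horn inclusions generate the acyclic cofibrations of $\Set_\Delta$ --- and it is tied up with $\square$ being a strict test category and with the comparison to $\Set_\Delta$ that you treat separately at the end. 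Since you ultimately defer the ``delicate inputs'' to the same two references the paper cites, the proposal is acceptable as a guide to the literature, but you should move this particular claim from the ``follows from the existence theorem'' column into the ``deferred hard theorem'' column, alongside the Quillen equivalence. A smaller omission of the same kind: for the phrase ``monoidal left Quillen equivalence'' to be meaningful one also needs $\Set_\square$ to satisfy the pushout-product axiom, whose acyclic half again rests on knowing that certain pushout-products of open-box inclusions with boundary inclusions are anodyne.
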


\begin{cor}
  \label{cor:fromcubical}
  Suppose $\mathcal{C}$ is a monoidal model category in the sense of
  \cite[\S 4]{hovey-modelcategories}. Let $F\co \square^{\leq 1} \to
  \mathcal{C}$ be a functor, and let $\tilde F\co \Set_\square \to
  \mathcal{C}$ be an extension to a monoidal left adjoint. Then
  $\tilde F$ is a left Quillen functor if and only if the unit of
  $\mathcal{C}$ is cofibrant and the maps $F(j^0)$, $F(j^1)$ express
  $F(\square^1)$ as a cylinder object for the unit of
  $\mathcal{C}$. In particular, such a monoidal left Quillen functor
  exists.
\end{cor}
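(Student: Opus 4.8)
The plan is to leverage the explicit description of the generating (acyclic) cofibrations from Cisinski's theorem together with the fact that $\tilde F$ is simultaneously a left adjoint and monoidal. The key structural observation I would record first is that, because $\tilde F$ preserves all colimits and the tensor product, it carries pushout-products to pushout-products: $\tilde F(f \boxtimes g) \cong \tilde F(f) \boxtimes \tilde F(g)$ for all maps $f, g$ in $\Set_\square$, since both the relevant pushouts and the $\otimes$ defining $\boxtimes$ are preserved. Since $\Set_\square$ is cofibrantly generated, $\tilde F$ is left Quillen if and only if it sends the maps of \eqref{eq:gencof} to cofibrations and the maps of \eqref{eq:genaccof} to acyclic cofibrations. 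Applying the structural observation to the definitions \eqref{eq:gencof} and \eqref{eq:genaccof} yields $\tilde F(i^{\boxtimes n}) = \tilde F(i)^{\boxtimes n}$ and $\tilde F(i \boxtimes \cdots \boxtimes j^\epsilon \boxtimes \cdots \boxtimes i) = \tilde F(i) \boxtimes \cdots \boxtimes F(j^\epsilon) \boxtimes \cdots \boxtimes \tilde F(i)$, where $\tilde F(i)\co \mb I \amalg \mb I \to F(\square^1)$ is the map with components $F(j^0), F(j^1)$ and $F(j^\epsilon)\co \mb I \to F(\square^1)$. The whole statement thus reduces to the behaviour of these two maps.

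For the forward implication, suppose $\tilde F$ is left Quillen. Evaluating on the $n=0$ case of \eqref{eq:gencof}, namely $\emptyset \to \square^0$, shows $\emptyset \to \mb I$ is a cofibration, so the unit is cofibrant. The $n=1$ case of \eqref{eq:gencof} is $i$ itself, so $\tilde F(i)$ is a cofibration, which is precisely the assertion that $(F(j^0), F(j^1))\co \mb I \amalg \mb I \to F(\square^1)$ is a cofibration. The $n=1$ case of \eqref{eq:genaccof} shows $F(j^\epsilon)$ is an acyclic cofibration; since $F(r) \circ F(j^\epsilon) = F(r j^\epsilon) = \mathrm{id}_{\mb I}$ and $F(j^\epsilon)$ is a weak equivalence, two-out-of-three forces $F(r)\co F(\square^1) \to \mb I$ to be a weak equivalence. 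Hence $F(\square^1)$, with structure maps $F(j^0), F(j^1)$ and projection $F(r)$, is a cylinder object for the unit.

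Conversely, assume the unit is cofibrant and $F(\square^1)$ is such a cylinder object, so $\tilde F(i)$ is a cofibration and $F(r)$ a weak equivalence. Each coproduct inclusion $\mb I \to \mb I \amalg \mb I$ is a cofibration (it is $\mathrm{id}_{\mb I} \amalg (\emptyset \to \mb I)$), so $F(j^\epsilon) = \tilde F(i) \circ (\text{inclusion})$ is a cofibration, and $F(r) F(j^\epsilon) = \mathrm{id}$ with $F(r)$ a weak equivalence gives, by two-out-of-three, that $F(j^\epsilon)$ is a weak equivalence, hence an acyclic cofibration. Now I invoke the pushout-product axiom for $\mathcal{C}$: an $n$-fold pushout-product of cofibrations is a cofibration—with the empty ($n=0$) case interpreted as $\emptyset \to \mb I$, a cofibration since the unit is cofibrant—so $\tilde F(i)^{\boxtimes n}$ is a cofibration; and a pushout-product of cofibrations in which one factor is acyclic is an acyclic cofibration, so $\tilde F(i) \boxtimes \cdots \boxtimes F(j^\epsilon) \boxtimes \cdots \boxtimes \tilde F(i)$ is an acyclic cofibration. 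This checks both generating conditions, so $\tilde F$ is left Quillen.

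Finally, for the existence clause, when the unit is cofibrant one produces a suitable $F$ by factoring the fold map $\mb I \amalg \mb I \to \mb I$ (via the factorization axiom) as a cofibration followed by a weak equivalence; setting $F(\square^1)$ to be the resulting cylinder with its structure maps, the ``if'' direction yields the desired monoidal left Quillen functor. I expect the main obstacle to lie not in either logical direction but in the structural lemma underpinning them—that the cocontinuous monoidal extension $\tilde F$ genuinely commutes with the pushout-product, including the correct identification of the degenerate $n=0$ case—since once $\tilde F(f \boxtimes g) \cong \tilde F(f) \boxtimes \tilde F(g)$ is in hand, both implications are short applications of the pushout-product axiom and two-out-of-three.
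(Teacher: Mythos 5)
Your proposal is correct and follows essentially the same route as the paper: reduce to the generating (acyclic) cofibrations, observe that the cocontinuous monoidal functor $\tilde F$ carries pushout-products to pushout-products, and invoke the pushout-product axiom together with cofibrancy of the unit; the paper's own proof is just a terser version of this, leaving implicit the two-out-of-three and fold-map-factorization details you spell out.
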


\begin{proof}
  In order for $\tilde F$ to be a left Quillen functor, it must take
  the acyclic cofibrations $j^0$ and $j^1$ to acyclic cofibrations and
  the cofibration $i$ to a cofibration; this happens precisely when
  $F(\square^1)$ is expressed as a cylinder object. It also must take
  the map $\emptyset \to \square^0$ to a cofibration, so the unit must
  be cofibrant.

  Conversely, suppose that $F$ expresses $F(\Delta^1)$ as a cylinder
  object, so that the map $\tilde F(i)$ is a cofibration and that the
  maps $\tilde F(j^\epsilon)$ are both acyclic cofibrations. Then the
  fact that $F$ is monoidal and colimit-preserving implies that
  $\tilde F(f_1 \boxtimes \dots \boxtimes f_n)$ is the pushout-product
  $\tilde F(f_1) \boxtimes \dots \boxtimes \tilde F(f_n)$. This makes
  the map $\tilde F(i\boxtimes \dots \boxtimes i)$ into an iterated
  pushout-product of cofibrations, and makes the map $F(i\boxtimes
  \dots\boxtimes j^\epsilon \boxtimes \dots \boxtimes i)$ into an
  iterated pushout-product of several cofibrations and one acyclic
  cofibration. As $\mathcal{C}$ is a monoidal model category with
  cofibrant unit, $\tilde F$ then preserves the generating
  cofibrations and generating acyclic cofibrations, and hence is a
  left Quillen functor.
\end{proof}

\section{Enriched categories}
\label{sec:enriched-categories}

Suppose that $\Ss$ is a monoidal model category with unit $\mb I$,
that every object of $\Ss$ is cofibrant, and that the collection of
weak equivalences in $\Ss$ is stable under filtered colimits. For such
$\Ss$, Lurie constructs a left proper combinatorial model structure on
$\Cat_\Ss$ in \cite[A.3.2.4]{lurie-htt} which we will review now.

Following the notation of \cite[\S A.3.2]{lurie-htt}, we define the
following four special examples of enriched categories:
\begin{itemize}
\item Let $\emptyset$ be the trivial enriched category with no objects.
\item Let $[0]_\Ss$ be the category with a single object $0$ and
  $\Map_\mathcal{C}(0,0) = \mb I$.
\item Let $[1]_A$ be the category with objects $0$ and $1$, such that
  $\Map_{[1]_A}(0,1) = A$, $\Map_{[1]_A}(i,i) = \mb I$, and
  $\Map_{[1]_A}(1,0) = \emptyset$. If $A = \mb I$, we simply write
  $[1]_\Ss$ for $[1]_{\mb I}$.
\item Let $[1]_{\tilde \Ss}$ be the category with objects $0$ and
  $1$, such that $\Map_{[1]_{\tilde \Ss}}(i,j) = \mb I$ for all $i$ and $j$.
\end{itemize}

The model structure on $\Cat_\Ss$ is defined by the following
requirements:
\begin{itemize}
\item An enriched functor $F\co \mathcal{C} \to \mathcal{D}$ is a weak
  equivalence if the map $h\mathcal{C} \to h\mathcal{D}$ of homotopy
  categories, obtained by applying $[\mb I,-]_{h\Ss}$ to morphism
  objects, is an equivalence, and if for all $c,c' \in \mathcal{C}$
  the map $\Map_{\mathcal{C}}(c,c') \to \Map_{\mathcal{D}}(Fc,Fc')$ is
  an equivalence in $\Ss$.
\item The set
\[
\Big\{[1]_S \to [1]_{S'}
  \mid S \to S' \text{ a generating cofibration}\Big\} \cup \Big\{\emptyset \to [0]_\Ss\Big\}
\]
is a set of generating cofibrations.
\end{itemize}
As a consequence, a monoidal left Quillen functor $F\co \Ss \to \Ss'$
between such categories gives rise to a left Quillen functor $\Cat_\Ss
\to \Cat_{\Ss'}$, which is a left Quillen equivalence if $F$ was
\cite[A.3.2.6]{lurie-htt}.

In the following, we will write $\Cat_\square$ for the category
$\Cat_{\Set_\square}$ of categories enriched in cubical sets, and
similarly $\Cat_\Delta$ for the category $\Cat_{\Set_\Delta}$ of
categories enriched in simplicial sets.

\begin{defn}
  Let $\mathcal{C} \in \Cat_\square$ be a category enriched in cubical
  sets. Given morphisms $f,g\co c \to c'$ in the underlying
  category $\mathcal{C}$, classified by maps $f,g\co \square^0 \to
  \Map_{\mathcal{C}}(c,c')$, a homotopy from $f$ to $g$
  identity is a morphism $H\co \square^1 \to \Map_{\mathcal{C}}(c,c')$
  such that $H \circ j^0 = f$ and $H \circ j^1 = g$.

  The {\em homotopy inverse} category $\mathcal{H} \in
  \Cat_\square$ is universal among cubically enriched categories
  possessing morphisms $u\co c \to c'$ and $v\co c' \to c$ together
  with a homotopy from $v \cdot u$ to the identity $id_c$.
\end{defn}

The category $\mathcal{H}$ can be described as an iterated pushout
diagram in $\Cat_\square$ as follows.
\[
\xymatrix{
[1]_\emptyset \ar[d] \ar[r] & [1]_{\square} \ar[d]^u &&
[1]_{\partial \square^1} \ar[r] \ar[d] & P \ar[d]\\
[1]_{\square} \ar[r]_v & P &&
[1]_{\square^1} \ar[r]_H & \mathcal{H}
}
\]
The category $P$ classifies a pair of morphisms $u$ and $v$ in
opposing directions, and the pushout defining $\mathcal{H}$ adjoins
a homotopy from $v \cdot u$ to $id_c$. This definition allows us to
deduce the following properties.
\begin{prop}
  The maps $\ell\co [1]_\square \to \mathcal{H}$ and $r\co
  [1]_\square \to \mathcal{H}$, classifying the functions $u$ and $v$
  respectively, are cofibrations. A map of enriched categories
  $[1]_\square \to \mathcal{C}$, classifying a map $f\co c \to c'$ in
  $\mathcal{C}$, has an extension along $\ell$ if and only if $f$
  admits a left homotopy inverse, and has an extension along $r$ if
  and only if $g$ admits a right homotopy inverse.
\end{prop}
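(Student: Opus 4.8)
The plan is to read everything off the explicit iterated-pushout presentation of $\mathcal{H}$, using the closure of cofibrations under pushout and composition for the first assertion and the universal property encoded by those pushouts for the second. The one technical input I would isolate first is the claim that the functor $S \mapsto [1]_S$ carries cofibrations of $\Set_\square$ to cofibrations of $\Cat_\square$. Although $[1]_{(-)}$ does not preserve all colimits---it doubles objects on coproducts---every map $[1]_g$ is the identity on the two-element object set, and for such identity-on-objects maps between the degenerate categories $[1]_S$ (which have no composable pairs of non-identity morphisms) the pushout in $\Cat_\square$ is the hom-wise pushout: $[1]_{T \coprod_S S'} \cong [1]_T \coprod_{[1]_S} [1]_{S'}$, and similarly for transfinite composites. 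Hence the class of $g$ with $[1]_g$ a cofibration contains the generating cofibrations $\partial\square^n \to \square^n$ (whose images are generating cofibrations of $\Cat_\square$) and is closed under pushout, transfinite composition, and retract, so it contains all cofibrations.

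Granting this, the cofibrancy of $\ell$ and $r$ is formal. The two structural maps $[1]_\emptyset \to [1]_\square$ in the square defining $P$ are cofibrations: up to the swap automorphism of $[1]_\emptyset$ (an isomorphism) each is the map $[1]_{\emptyset \to \square^1}$, and $\emptyset \to \square^1$ is a cofibration in $\Set_\square$, as it factors as $\emptyset \to \square^0 \xrightarrow{j^\epsilon} \square^1$. Since $u$ and $v$ are obtained as the two pushouts of these maps, both are cofibrations. The map $P \to \mathcal{H}$ is the pushout of $[1]_{\partial\square^1} \to [1]_{\square^1}$, which is itself a generating cofibration of $\Cat_\square$, so it is a cofibration as well. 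Finally $\ell$ is the composite $[1]_\square \xrightarrow{u} P \to \mathcal{H}$ and $r$ the composite $[1]_\square \xrightarrow{v} P \to \mathcal{H}$, each a composite of cofibrations.

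For the extension property I would unwind the universal property. Decoding the two pushouts, a map $\mathcal{H} \to \mathcal{C}$ is exactly a pair of objects $c_0, c_1$, morphisms $u \co c_0 \to c_1$ and $v \co c_1 \to c_0$, and a homotopy $v \cdot u \simeq id_{c_0}$, the outer pushout being what adjoins this homotopy between $v \cdot u$ and $id_{c_0}$. Because $\ell$ classifies $u$, an extension of $\phi$ along $\ell$ is precisely the choice of a morphism $v \co c' \to c$ together with a homotopy $v \cdot f \simeq id_c$---that is, a left homotopy inverse of $f$. Symmetrically, since $r$ classifies $v$, an extension along $r$ is the choice of a morphism $w \co c' \to c$ and a homotopy $f \cdot w \simeq id_{c'}$, a right homotopy inverse of $f$ (reading the ``$g$'' of the statement as $f$).

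I expect the only real obstacle to be the preservation lemma of the first paragraph. Pushouts of enriched categories are in general complicated, so the point requiring genuine care is that along identity-on-objects maps of the categories $[1]_S$ the pushout reduces to the naive hom-wise pushout; the triviality of composition in $[1]_S$ is exactly what licenses this, after which the argument is formal manipulation of the closure properties of cofibrations.
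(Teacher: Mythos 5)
Your proof is correct and is exactly the deduction the paper leaves implicit: you read the cofibrancy of $\ell$ and $r$ off the iterated-pushout presentation and unwind the universal property for the extension criterion, which is all the paper intends (note that the relevant maps $[1]_{\emptyset \to \square^0}$ and $[1]_{\partial\square^1 \to \square^1}$ are already generating cofibrations of $\Cat_\square$ by definition, so your general lemma that $[1]_{(-)}$ preserves all cofibrations, while true and proved correctly, is more than you need here). The only slip is notational: since $[1]_\square$ abbreviates $[1]_{\square^0}$, the structural maps into $P$ are $[1]_{\emptyset \to \square^0}$ (possibly precomposed with the swap isomorphism), not $[1]_{\emptyset \to \square^1}$.
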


\begin{defn}
  We define the category $\mathcal{E} \in \Cat_\square$ as the pushout
  in the diagram
  \[
  \xymatrix{
    [1]_\square \ar[r]^\ell \ar[d]_r & \mathcal{H} \ar[d] \\
    \mathcal{H} \ar[r] & \mathcal{E}.
  }
  \]
\end{defn}

The map $[1]_\square \to \mathcal{E}$ is a cofibration.  More, since
the map $[1]_\square \to [1]_{\tilde \square}$ sends the morphism $f$
to one with an inverse, there is a factorization $[1]_\square \to
\mathcal{E} \to [1]_{\tilde \square}$ (which, as the target category
is discrete, is unique).

\begin{lem}
  \label{lem:invextension}
  Suppose $\mathcal{C} \in \Cat_\square$ is fibrant, and $f\co
  [1]_\square \to \mathcal{C}$ classifies a map which becomes an
  isomorphism in the homotopy category $h\mathcal{C}$. Then $f$
  extends to a map $\mathcal{E} \to \mathcal{C}$.
\end{lem}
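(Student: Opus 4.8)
The plan is to combine the universal property of the pushout defining $\mathcal{E}$ with the preceding Proposition, reducing everything to the construction of a two-sided homotopy inverse. Since $\mathcal{E}$ is the pushout of $\ell$ and $r$ along $[1]_\square$, giving an extension of $f$ to a map $\mathcal{E} \to \mathcal{C}$ is the same as giving two maps $\mathcal{H} \to \mathcal{C}$, one restricting to $f$ along $\ell$ and one restricting to $f$ along $r$; these automatically agree on $[1]_\square$, so the pushout property assembles them into the desired map. By the preceding Proposition, an extension of $f$ along $\ell$ exists if and only if $f$ admits a left homotopy inverse, and an extension along $r$ exists if and only if $f$ admits a right homotopy inverse. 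It therefore suffices to produce both a left and a right homotopy inverse for $f$ in $\mathcal{C}$.

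Next I would exploit fibrancy of $\mathcal{C}$ to control the mapping objects and the homotopy category. The maps $[1]_{\sqcap^n_{(k,\epsilon)}} \to [1]_{\square^n}$ are acyclic cofibrations in $\Cat_\square$: they are cofibrations since $S \mapsto [1]_S$ preserves generating cofibrations and colimits, and weak equivalences since they are isomorphisms on homotopy categories and on all mapping objects except one, where they restrict to the generating acyclic cofibration $\sqcap^n_{(k,\epsilon)} \to \square^n$. Solving the corresponding lifting problems against the fibrant $\mathcal{C}$ shows that each $\Map_\mathcal{C}(c,c')$ is a fibrant cubical set. Consequently the morphisms of $h\mathcal{C}$ are computed as $\pi_0 \Map_\mathcal{C}(c,c') = [\square^0, \Map_\mathcal{C}(c,c')]_{h\Set_\square}$, two parallel morphisms agree in $h\mathcal{C}$ precisely when they are joined by a cubical homotopy $\square^1 \to \Map_\mathcal{C}(c,c')$ in the sense of the Definition above, and composition in $h\mathcal{C}$ is induced on $\pi_0$ by the strict enriched composition and hence may be computed on representatives.

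Finally I would feed in the hypothesis that $f$ becomes an isomorphism in $h\mathcal{C}$. Picking a representative $g$ of its inverse gives $[g \cdot f] = [\mathrm{id}]$ and $[f \cdot g] = [\mathrm{id}]$ in $h\mathcal{C}$, and by the previous paragraph each of these equalities is witnessed by an honest cubical homotopy, so that $g$ is simultaneously a left and a right homotopy inverse of $f$. This supplies the two extensions $\mathcal{H} \to \mathcal{C}$ of the first step, which glue along the pushout to the required map $\mathcal{E} \to \mathcal{C}$. The main obstacle is exactly this passage from an isomorphism in $h\mathcal{C}$ to genuine cubical homotopies: it is where fibrancy is indispensable, both to know that the mapping objects are fibrant—so that the homotopy relation is symmetric and can be oriented as the Definition demands—and to know that composition in $h\mathcal{C}$ is faithfully represented by the strict composition in $\mathcal{C}$.
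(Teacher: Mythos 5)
Your proposal is correct and follows essentially the same route as the paper: reduce via the pushout defining $\mathcal{E}$ to extensions along $\ell$ and $r$, use fibrancy of $\mathcal{C}$ to deduce fibrancy of the mapping objects, and then convert the identities $[g\cdot f]=[\mathrm{id}_c]$ and $[f\cdot g]=[\mathrm{id}_{c'}]$ in $h\mathcal{C}$ into genuine cubical homotopies using $\square^1$ as a cylinder object for $\square^0$. The only differences are cosmetic: you spell out the lifting argument for fibrancy of the mapping objects where the paper cites Lurie, and you treat the left and right inverses simultaneously with a single representative $g$ where the paper appeals to symmetry.
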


\begin{proof}
  We need to show that $f$ extends along the maps $\ell, r \co
  [1]_\square \to \mathcal{H}$. First assume that $f$ has a left
  inverse $g$: we will show that it extends over $\ell$.

  We note that since $\mathcal{C}$ is a fibrant
  $\Set_\square$-enriched category, the objects
  $\Map_{\mathcal{C}}(c,c')$ are fibrant for all $c$ and $c'$ (cf. the
  proof of \cite[A.3.2.24]{lurie-htt}). Therefore, since $f\co
  \square^0 \to \Map_{\mathcal{C}}(c,c')$ has a left inverse in the
  homotopy category, the left inverse has a representative $g$ in the
  form of a map $g\co \square^0 \to \Map_{\mathcal{C}}(c',c)$. More,
  since the composite $g \cdot f$ becomes equal to $id_c$ as maps
  $\square^0 \to \Map_{\mathcal{C}}(c,c)$ in the homotopy category of
  $\Ss$, these maps are left homotopic. In particular, since
  $\square^1$ is a cylinder object for $\square^0$ in $\Set_\square$,
  there exists an extension in the following diagram:
\[
\xymatrix{
  \partial \square^1 \ar[rr]^-{g\cdot f \coprod id_c} \ar[d] &&
  \Map_{\mathcal{C}}(c,c).\\
  \square^1 \ar@{.>}[urr]_H
}
\]
  The description of the map $\ell\co [1]_\square \to \mathcal{H}$ by
  its universal property then produces precisely the desired
  extension.

  The extension over $r$ is symmetric. (Note that that it is the proof
  itself that is symmetric: we cannot, for example, proceed by taking
  the opposite category of $\mathcal{H}$ because cubically enriched
  categories do not have a natural opposite category.)
\end{proof}

We now consider what happens when the map $f$ is inverted.

\begin{prop}
\label{prop:localization}
  Let $\mathcal{E}\langle f^{-1}\rangle$ be the localization of
  $\mathcal{E}$ obtained by inverting the map $f$. Then in the
  diagram
  \[
  \mathcal{E} \to \mathcal{E}\langle f^{-1}\rangle \to   [1]_{\tilde
    \square},
  \]
  which is determined by the universal property of the localization,
  both maps are weak equivalences in $\Cat_\square$.
\end{prop}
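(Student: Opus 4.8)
The plan is to transport the statement along Cisinski's monoidal left Quillen equivalence to the simplicially enriched setting, where it is already known. Write $F\co \Set_\square \to \Set_\Delta$ for the cubical realization. Since $F$ sends $\square^n$ to $(\Delta^1)^n$ and is built as the colimit-preserving extension of a monoidal functor, it is strong monoidal and cocontinuous, so the induced left Quillen functor $\Phi = F_*\co \Cat_\square \to \Cat_\Delta$ of \cite[A.3.2.6]{lurie-htt} is computed simply by applying $F$ to each mapping object, leaving object sets unchanged. As a left adjoint, $\Phi$ preserves the defining pushouts and the generating categories $[1]_S$; using $F(\square^0) = \Delta^0$, $F(\square^1) = \Delta^1$, and $F(\partial\square^1) = \partial\Delta^1$, it therefore carries $\mathcal{H}$, $\mathcal{E}$, the localization $\mathcal{E}\langle f^{-1}\rangle$ (the pushout inverting $f$ along $[1]_\square \to [1]_{\tilde\square}$), and $[1]_{\tilde\square}$ to the corresponding simplicial objects $\mathcal{H}_\Delta$, $\mathcal{E}_\Delta$, $\mathcal{E}_\Delta\langle f^{-1}\rangle$, and $[1]_{\tilde\Delta}$ defined by the analogous diagrams. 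Thus the whole diagram $\mathcal{E} \to \mathcal{E}\langle f^{-1}\rangle \to [1]_{\tilde\square}$ maps to its simplicial namesake.

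The key observation I would isolate is that $\Phi$ both preserves \emph{and} reflects weak equivalences of $\Cat_\square$. A map $g$ is a weak equivalence exactly when $hg$ is an equivalence of homotopy categories and each $\Map(x,y) \to \Map(gx,gy)$ is a weak equivalence in $\Set_\square$. Applying $F$ hom-wise identifies the latter maps with the mapping-object maps of $\Phi(g)$; because every cubical set is cofibrant (the cofibrations being the monomorphisms), $F$ is a left Quillen equivalence that preserves and reflects \emph{all} weak equivalences of $\Set_\square$, so these maps are weak equivalences if and only if their $F$-images are. Likewise $F$ induces an equivalence $h\Set_\square \simeq h\Set_\Delta$ compatible with $[\mb I, -]$ (since $F\mb I = \mb I$ and $F$ is strong monoidal), which yields a natural isomorphism $h\mathcal{C} \cong h\Phi(\mathcal{C})$ and hence $hg$ is an equivalence if and only if $h\Phi(g)$ is. Consequently $g$ is a weak equivalence in $\Cat_\square$ precisely when $\Phi(g)$ is one in $\Cat_\Delta$.

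It then remains to invoke the simplicial case: in $\Cat_\Delta$ both maps $\mathcal{E}_\Delta \to \mathcal{E}_\Delta\langle f^{-1}\rangle \to [1]_{\tilde\Delta}$ are weak equivalences. This is the content of the invertibility hypothesis \cite[A.3.2.12]{lurie-htt} for $\Set_\Delta$, which rests on the simplicial localization theorem of \cite{dwyer-kan-simpliciallocalization}. Applying the reflection principle of the previous paragraph to each of the two maps gives the proposition; two-out-of-three lets one reduce to any convenient pair among the three maps of the diagram.

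I expect the main obstacle to be the reflection statement rather than the simplicial input, which is cited. Concretely, the two points needing care are that $\Phi$ really is computed hom-wise—this uses that cubical realization is strong monoidal, so no left-Kan correction intervenes—and that $F$ reflects weak equivalences, which uses that all cubical sets are cofibrant. A secondary verification is that $\Phi$ sends $\mathcal{E}$, its localization, and $[1]_{\tilde\square}$ to exactly the simplicial objects for which the hypothesis is known, i.e.\ that the defining pushouts and the inverting pushout along $[1]_\square \to [1]_{\tilde\square}$ are carried to their simplicial counterparts. The alternative of proving the proposition internally to $\Cat_\square$—computing the mapping objects of the cofibrant category $\mathcal{E}$ directly and showing them contractible, using the explicit iterated-pushout description together with Lemma~\ref{lem:invextension}—would amount to reproving Dwyer--Kan in the cubical setting, which is exactly the hard work the reduction is designed to avoid.
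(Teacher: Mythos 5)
Your reduction to the simplicial case via the realization functor is exactly the paper's first move, and your treatment of the first map is sound: $[1]_\Delta \to L(\mathcal{E})$ is a cofibration classifying a homotopy-category isomorphism, so Dwyer--Kan (equivalently, the invertibility hypothesis for $\Set_\Delta$) gives that $L(\mathcal{E}) \to L(\mathcal{E})\langle f^{-1}\rangle$ is a weak equivalence. The genuine gap is the second map. The invertibility hypothesis only compares a category with its localization; it says nothing whatsoever about the map $L(\mathcal{E})\langle f^{-1}\rangle \to [1]_{\tilde\Delta}$, so your appeal to it there does not apply. That second equivalence is the assertion that the mapping spaces of the localized category are all contractible, i.e.\ that this particular model of the ``walking equivalence'' collapses to the walking isomorphism, and neither Dwyer--Kan's localization theorem nor two-out-of-three can produce it from the first equivalence alone: you would need a second input among the three maps, and you have supplied only one.

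The paper closes exactly this gap with a computation you have omitted (and which is not a re-proof of Dwyer--Kan in the cubical setting, contrary to your closing remark --- it is an additional simplicial computation): after inverting $f$, the full subcategory on the object $c$ has endomorphism object the free simplicial monoid on $\Delta^1 \vee \Delta^1$, i.e.\ the James construction $J(\Delta^1 \vee \Delta^1)$, whose realization $J([0,1]\vee[0,1])$ is contractible; hence all mapping spaces of $L(\mathcal{E})\langle f^{-1}\rangle$ are weakly equivalent to $\Delta^0$ and the map to $[1]_{\tilde\Delta}$ is an equivalence. The paper's warning that using a single map as both left and right inverse would instead yield $J(S^1) \simeq \Omega S^2$ shows this step is genuinely delicate and cannot be absorbed into a citation. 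Your ``reflection'' discussion of $\Phi$ is fine in substance (the paper uses the same fact implicitly, via $L$ being a left Quillen equivalence between categories with all objects cofibrant), but as written your proof establishes only one of the two asserted equivalences.
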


\begin{proof}
  Because there is a monoidal left Quillen equivalence $L\co
  \Cat_\square \to \Cat_\Delta$, which preserves pushouts and
  localizations, it suffices to show that these maps of cubically
  enriched categories becomes equivalences of simplicially enriched
  categories.
  
  The category $L(\mathcal{E})$ is the universal simplicial category
  with a morphism $f$ together with a homotopy left inverse and a
  homotopy right inverse; moreover, the map $[1]_{\Delta} \to
  L(\mathcal{E})$ classifying $f$ is a cofibration (as the image of a
  cofibration under $L$).

  We may then apply work of Dwyer and Kan
  \cite{dwyer-kan-simpliciallocalization}, which shows that the
  localization map $L(\mathcal{E}) \to L(\mathcal{E})\langle
  f^{-1}\rangle$ that inverts $f$ is a weak equivalence of
  simplicially enriched categories.

  We now need to show that the map $L(\mathcal{E})\langle
  f^{-1}\rangle \to [1]_{\tilde \Delta}$ is an equivalence. This
  localization is still an iterated pushout, but as its two objects
  are now isomorphic it may be reinterpreted: it is the universal
  example of a simplicial category with two objects $c$ and $c'$, an
  isomorphism $c \to c'$, two maps $g_1, g_2\co c \to c$, and two
  homotopies $H_1\co g_1 \simeq id_c$ and $H_2\co g_2 \simeq id_c$.
  To show that this is equivalent to $[1]_{\tilde \Delta}$, we must
  show that the mapping spaces in $L(\mathcal{E})$ are all weakly
  equivalent to $\Delta^0$.

  The full subcategory $\mathcal{E}' \subset L(\mathcal{E})\langle
  f^{-1} \rangle$ spanned by the object $c$ is weakly equivalent to
  this one. As it has one object, $\mathcal{E}'$ is determined
  completely by the simplicial monoid $\Map(c,c)$, which is the free
  simplicial monoid with two elements $g_i$ and paths from $g_i$ to
  the identity. This monoid is the James construction $J(\Delta^1 \vee
  \Delta^1)$ on the based simplicial set $\Delta^1 \vee \Delta^1$, and
  as such it is weakly equivalent to $\Delta^0$: its realization
  $J([0,1] \vee [0,1])$ as a topological space is contractible.
\end{proof}

We note that for this result to hold, it is important that the
construction of ${\cal E}$ not ask for the left and right inverses
of $f$ to be the same map: in the final step we would instead obtain
$J(S^1) \simeq \Omega S^2$ rather than a contractible space if we did
so.

\begin{cor}
  \label{cor:cofreplacement}
  The map $[1]_\square \to \mathcal{E}$ is a cofibrant replacement for
  $[1]_\square \to [1]_{\tilde \square}$ in $\Cat_\square$.
\end{cor}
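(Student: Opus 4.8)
The plan is to read the corollary off from Proposition~\ref{prop:localization} together with the structural facts about $\mathcal{E}$ already recorded, rather than to prove anything genuinely new. Unwinding the definition, to say that $[1]_\square \to \mathcal{E}$ is a cofibrant replacement for $[1]_\square \to [1]_{\tilde \square}$ amounts to three assertions: that $[1]_\square$ is cofibrant, that $[1]_\square \to \mathcal{E}$ is a cofibration, and that the canonical factorization map $\mathcal{E} \to [1]_{\tilde \square}$ is a weak equivalence (the triangle $[1]_\square \to \mathcal{E} \to [1]_{\tilde \square}$ commuting because the second map is precisely that factorization). Two of these are already in hand. The map $[1]_\square \to \mathcal{E}$ was observed to be a cofibration immediately after the definition of $\mathcal{E}$. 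And $[1]_\square$ is cofibrant: it is built from $\emptyset$ by adjoining its two objects via two copies of the generating cofibration $\emptyset \to [0]_\square$, followed by the generating cofibration $[1]_\emptyset \to [1]_\square$ induced by the generating cofibration $\emptyset \to \square^0$ of $\Set_\square$ (the $n = 0$ case of Equation~\eqref{eq:gencof}). So the entire content lies in the third assertion.

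For that, I would identify the factorization map $\mathcal{E} \to [1]_{\tilde \square}$ with the composite $\mathcal{E} \to \mathcal{E}\langle f^{-1}\rangle \to [1]_{\tilde \square}$ appearing in Proposition~\ref{prop:localization}. Both are maps out of $\mathcal{E}$ whose precomposition with $[1]_\square \to \mathcal{E}$ recovers the structural map $[1]_\square \to [1]_{\tilde \square}$: the composite does so because the localization map $\mathcal{E} \to \mathcal{E}\langle f^{-1}\rangle$ carries $f$ to an invertible morphism, and the second factor is the map determined by the universal property of the localization. Since $[1]_{\tilde \square}$ is discrete, such a factorization of $[1]_\square \to [1]_{\tilde \square}$ through $\mathcal{E}$ is unique, so the two maps coincide. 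Proposition~\ref{prop:localization} asserts that each of $\mathcal{E} \to \mathcal{E}\langle f^{-1}\rangle$ and $\mathcal{E}\langle f^{-1}\rangle \to [1]_{\tilde \square}$ is a weak equivalence in $\Cat_\square$; as weak equivalences are closed under composition, the factorization map $\mathcal{E} \to [1]_{\tilde \square}$ is a weak equivalence, which completes the argument.

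The only real subtlety I anticipate is this identification of the two maps $\mathcal{E} \to [1]_{\tilde \square}$, which is where the discreteness of $[1]_{\tilde \square}$—and hence the uniqueness of the factorization through $\mathcal{E}$—does the work; everything else is bookkeeping with the cellular structure of $[1]_\square$ and with facts already stated. In particular, no homotopy-theoretic input beyond Proposition~\ref{prop:localization} is required, and the corollary should be recorded precisely because it repackages that proposition in the form (a cofibration followed by a weak equivalence to $[1]_{\tilde \square}$) that is convenient for inverting morphisms in the sequel.
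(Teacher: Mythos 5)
Your proposal is correct and matches the paper's (implicit) reasoning: the corollary is stated without proof precisely because it is the combination of the already-recorded facts that $[1]_\square \to \mathcal{E}$ is a cofibration and that the (unique, by discreteness of the target) factorization $\mathcal{E} \to [1]_{\tilde\square}$ is the composite of the two weak equivalences of Proposition~\ref{prop:localization}. Your extra verification that $[1]_\square$ is cofibrant via the generating cofibrations is accurate and harmless.
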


\section{The invertibility hypothesis}
\label{sec:invert-hypoth}

We now recall the precise statement of the invertibility hypothesis
\cite[A.3.2.12]{lurie-htt}.
\begin{defn}
  \label{def:invhyp}
  Let $\Ss$ be a combinatorial monoidal model category. Assume that
  every object of $\Ss$ is cofibrant and that the collection of weak
  equivalences in $\Ss$ is stable under filtered colimits. We say that
  $\Ss$ satisfies the invertibility hypothesis when, for any
  isomorphism in the homotopy category $h\mathcal{C}$ classified by a
  cofibration $f\co [1]_\Ss \to \mathcal{C}$, if we form the pushout
  diagram of $\Ss$-enriched categories
\[
\xymatrix{
[1]_\Ss \ar[r]^i \ar[d] & \mathcal{C} \ar[d]^j \\
[1]_{\tilde \Ss} \ar[r] & \mathcal{C}\langle f^{-1} \rangle,
}
\]
the map $j$ is a weak equivalence.
\end{defn}

We will now prove our main result.

\begin{proof}[Proof of Theorem~\ref{thm:main}]
  Let $f\co [1]_\Ss \to \mathcal{C}$ be as in
  Definition~\ref{def:invhyp}.  As noted in
  \cite[A.3.2.13]{lurie-htt}, it suffices to verify that this
  condition is satisfied in the case where $\mathcal{C}$ is a fibrant
  $\Ss$-enriched category: in particular, we may assume that the
  mapping objects in $\mathcal{C}$ are fibrant.

  By Corollary~\ref{cor:fromcubical}, there exists a monoidal left
  Quillen functor $L\co \Set_\square \to \Ss$ from the
  category of cubical sets, sending $\square^1$ to a cylinder object
  for the monoidal unit. This induces a left Quillen functor $L\co
  \Cat_\square \to \Cat_\Ss$ with right adjoint $R$. The right adjoint
  preserves homotopy categories:
  \begin{align*}
    \Hom_{h\mathcal{C}}(c,c')
    &= \Hom_{h\Ss}([1]_\Ss,\Map_{\mathcal{C}}(c,c'))\\
    &= \Hom_{h\Ss}(L[1]_\square,\Map_{\mathcal{C}}(c,c'))\\
    &\cong \Hom_{h\Set_\square}([1]_\square,R\Map_{\mathcal{C}}(c,c'))\\
    &= \Hom_{hR\mathcal{C}}(c,c')
  \end{align*}
  The map $f$ becomes an isomorphism in the homotopy category of
  $\mathcal{C}$, so Lemma~\ref{lem:invextension} implies that the map
  $[1]_\square \to R\mathcal{C}$ classifying $f$ extends to a map
  $\mathcal{E} \to R\mathcal{C}$. The adjoint is an extension $[1]_\Ss
  \to L(\mathcal{E}) \to \mathcal{C}$.

  Now factor $L(\mathcal{E}) \to \mathcal{C}$ into a cofibration
  $L(\mathcal{E}) \to \mathcal{C}'$ followed by an acyclic fibration
  $\mathcal{C}' \to \mathcal{C}$.  The pushouts of $\mathcal{C}$ and
  $\mathcal{C}'$ along $[1]_\Ss \to [1]_{\tilde \Ss}$ are equivalent
  and so we may assume without loss of generality that $L(\mathcal{E})
  \to \mathcal{C}$ is a cofibration.

  Consider the following diagram of pushouts.
\[
\xymatrix{
[1]_\Ss \ar[r] \ar[d] &
L(\mathcal{E}) \ar[r] \ar[d] &
\mathcal{C} \ar[d] \\
[1]_{\tilde \Ss} \ar[r] &
L(\mathcal{E}\langle f^{-1} \rangle) \ar[r] &
\mathcal{C}\langle f^{-1}\rangle
}
\]
  The top row consists of cofibrations. Lemma~\ref{lem:invextension}
  implies that the center vertical map is an equivalence. As the model
  structure on $\Cat_\Ss$ is left proper, the right-hand vertical map
  is also an equivalence.\end{proof}

\bibliography{../masterbib}

\end{document}